%

\documentclass[preprint,10pt]{elsarticle}
\usepackage{amsthm,amsmath,amssymb,amsfonts}
\usepackage[american]{babel}
\usepackage{url}
\usepackage{indentfirst}
\usepackage[latin1]{inputenc}
\usepackage{geometry}
\usepackage{amsfonts}

\setcounter{MaxMatrixCols}{10}
\allowdisplaybreaks

\newtheorem{theorem}{Theorem}[section] 
\newtheorem*{theorem*}{Theorem}
\newtheorem{lemma}[theorem]{Lemma}
\newtheorem{proposition}[theorem]{Proposition}
\newtheorem{corollary}[theorem]{Corollary}

\theoremstyle{definition}
\newtheorem{definition}[theorem]{Definition}

\theoremstyle{remark}
\newtheorem{remark}[theorem]{Remark}


\setcounter{equation}{0}
\newcommand{\Z}{{\mathbb{Z}}}   

\newcommand{\K}{{\mathbb{K}}}   

\newcommand{\Q}{{\mathbb{Q}}}   

\newcommand{\U}{{\mathcal{U}}}   
\newcommand{\oo}{{\mathfrak{o}}}

\newcommand{\A}{{\mathcal{A}}}

\newcommand{\h}{{\mathbf{H}}}

\newcommand{\n}{{\eta}}
\newcommand{\z}{{\mathcal{Z}}}  

\journal{ be a}

\begin{document}

\begin{frontmatter}


\title{Alternative algebras with the hyperbolic property}

\author[a]{Juriaans, S. O.  \footnote{The author is supported by CNPq and FAPESP-Brazil}}
\author[a]{Polcino Milies, C. \footnote{The author is supported by CNPq-Brazil}}
\author[b]{Souza Filho, A.C. \footnote{The author is supported  by FAPESP-Brazil}
\footnote{This work is part of the third author's Ph.D thesis (\cite{thss}) which was written under the supervision of the first author.}}
\address[a]{Universidade de S\~ao Paulo, Instituto de Matem\'atica e Estat\'\i stica (IME-USP)
\\Caixa Postal 66281, S\~ao Paulo, CEP
05315-970 - Brasil\\ E-mails:  ostanley@usp.br and  polcino@ime.usp.br}
\address[b]{Universidade de S\~ao Paulo, Escola de Artes, Ci\^encias e Humanidades (EACH-USP),
\\Rua Arlindo B\'ettio, 1000, Ermelindo Matarazzo,
S\~ao Paulo,CEP
 03828-000 - Brasil\\ E-mail: acsouzafilho@usp.br}

\begin{abstract} We investigate the 
 structure of an alternative finite dimensional $\Q$-algebra $\mathfrak{A}$ subject to the condition that for a   $\Z$-order $\Gamma \subset \mathfrak{A}$, and thus for every $\Z$-order of $\mathfrak{A}$,  the loop of units of $\U (\Gamma )$ does not contain 
a free abelian subgroup of rank two. In particular, we prove that the radical  of such an algebra associates with the whole algebra.  We also classify $RA$-loops $L$ for which $\mathbb{Z}L$ has this property.  The classification for group rings is still an open problem.
\end{abstract}

\begin{keyword}


Alternative Algebras  \sep RA-Loops \sep Unit \sep Hyperbolic Group \sep Orders

\end{keyword}

\end{frontmatter} 

{\scriptsize {$2000$ Mathematics Subject Classification. Primary 20N05, 17D05. Secondary 16S34}}

\section{Introduction}

Group rings $\Z G$ whose unit groups $\U(\Z G)$ are hyperbolic
were characterized in \cite{jpp} in the case when $G$ is polycyclic-by-finite. A similar
question was considered  for $RG$, $R$ being the ring of 
algebraic integers of  $\mathbb{K}=\mathbb{Q}(\sqrt{-d})$ and $G$ a finite  group (see \cite{jps}).   In \cite{ijs,ijsf}, these results were extended to characterize  associative algebras $\A$ finite dimensional over the rational numbers containing  a  $\Z$-order $\Gamma\subset \A$ whose unit group $\U(\Gamma)$ does not contain a subgroup isomorphic to a free abelian group of rank two. An algebra $\A$ with this property is said to have the {\it hyperbolic property}. Using these general results, the finite semigroups $S$ and the fields $\mathbb{K}=\mathbb{Q}(\sqrt{-d})$ such that $\mathbb{K} S$ has the hyperbolic property were classified.

In this paper we study the same problem in the context of non-associative algebras, in particular those which are loop algebras. A loop $L$ is a nonempty set with a closed binary
operation $\cdot$ relative to which there is a two-sided identity and such that the right and left translation maps $R_x(g):=g
\cdot x$ and $L_x(g):=x \cdot g$ are bijections. Let $L$ be a finitely generated loop, we say that $L$ has the hyperbolic property if it does not contain a free
abelian subgroup of rank two. This definition is an extension of the notion of hyperbolic group
defined by Gromov \cite{grmv} via the Flat Plane Theorem
\cite[Corollary $III.\Gamma.3.10.(2)$]{mbah}. 
Here we characterize the $RA$-loops $L$  such that $\U(\Z L)$ has the hyperbolic property. 


In section $2$, we fix  notation and give definitions. In section $3$, we prove a structure theorem for finite dimensional alternative algebras with the hyperbolic property. As a consequence we obtain that, for these algebras, the radical associates with the whole algebra. In the last section we present a full classification
of those $RA$-loops $L$  whose  unit loop $\U(\Z L)$ has the hyperbolic property. We remark that this problem is not yet completely settled for groups.


\section{The Hyperbolic Property}

For a field $\mathbb{K}$ let $\h(\K)=(\frac{\alpha,\beta}{\K}), \ \alpha,\beta \in \K$ be the  
generalized quaternion algebra over $\mathbb{\K}$, i.e.,
 $\h(\K)=\K[i,j:i^{2}=\alpha,j^{2}=\beta,ij=-ji=k].$ The set $\{1,i,j,k\}$ is
a $\K$-basis of $\h(\K)$. Recall that an algebraic number $a$ is totally real if all its conjugates are real. A field $\K$ is totally real if every element $a \in \K$ is totally real. A number $a \in \K$ is called totally positive if all its conjugates are real and positive. If $\K$ is totally real and $\alpha,\beta$ are
totally positive, the algebra $\h(\K)$  is said to be a totally definite
quaternion algebra. 
The map $\n:\h(\K) \longrightarrow \K $,  $\n (x=x_{1}+x_{i}i+x_{j}j+x_{k}k) = x_{1}^{2}-\alpha x_{i}^{2}-\beta x_{j}^{2}+\alpha\beta x_{k}^{2}$ is called the  norm of $\h(\K)$.

Denoting by $[x,y,z]\dot{=}(xy)z-x(yz)$, we recall that a ring $A$ is
alternative if $[x,x,y]=[y,x,x]=0$, for every $x,y \in A$. 
Let $\mathfrak{A}$ be a finite dimension alternative $\mathbb{Q}$-algebra and $\mathfrak{R}$ the radical of $\mathfrak{A}$. Recall that the radical of a finite dimensional algebra is its unique maximal nilideal. In \cite[chapter $III$]{sch}, it is proved that $\mathfrak{R}$ is the set of the elements $z \in \mathfrak{A}$, such that, $az$ is nilpotent for all $a \in \mathfrak{A}$. Elements $z$ with this property are called properly nilpotent. 
 By \cite[Theorem $3.18$]{sch}, $\mathfrak{A} \cong
\mathfrak{S} \oplus \mathfrak{R}$ a direct sum as vector spaces, where $\mathfrak{S}$ is a
subalgebra of $\mathfrak{A}$ and $\mathfrak{S}\cong
\mathfrak{A}/\mathfrak{R}$ is semi-simple.

\begin{definition} Let $\K$ be a field, of characteristic zero, and let $\mathfrak{A}$ be a finite dimensional alternative $\K$-algebra. We say $\mathfrak{A}$ has
the {\it hyperbolic property} if there exists a $\Z$-order $\Gamma \subset
\mathfrak{A}$ whose  unit loop $\U(\Gamma)$ has the hyperbolic property.
\end{definition}

For an associative finite dimensional $\Q$-algebra this property was defined in \cite{ijs}. 

\begin{proposition}
\label{raddim} 

Let $\mathfrak{A}$ be an alternative finite dimension
$\Q$-algebra such that $\mathfrak{A} \cong \mathfrak{S} \oplus
\mathfrak{R}$, with $\mathfrak{R} \neq 0$ being the radical of $\mathfrak{A}$. If $\mathfrak{A}$ has the hyperbolic property, then $\mathfrak{R}$ is nilpotent of index $2$. Furthermore, there exists $j_0 \in
\mathfrak{R}$  such that and $\mathfrak{R}\cong \langle j_0
\rangle_{\Q}$ is the $\Q$-linear span of $j_0$ over
$\Q$.

\end{proposition}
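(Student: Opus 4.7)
The strategy is to show that any departure from the claimed structure on $\mathfrak{R}$ produces a pair of commuting units $1+u, 1+v \in \U(\Gamma)$ generating a free abelian subgroup of rank two, contradicting the hyperbolic hypothesis. The workhorse is this: if $u,v \in \Gamma \cap \mathfrak{R}$ are $\Q$-linearly independent, commute, and satisfy $u^2 = v^2 = uv = vu = 0$, then $1+u$ and $1+v$ are units in $\Gamma$ (with inverses $1-u$ and $1-v$), they commute, and $(1+u)^a(1+v)^b = 1 + au + bv$ equals $1$ only when $a=b=0$, giving $\langle 1+u, 1+v\rangle \cong \Z^2$. Since $\Gamma \cap \mathfrak{R}$ is a full $\Z$-lattice in $\mathfrak{R}$, any candidate pair in $\mathfrak{R}$ can be scaled into $\Gamma$. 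I will build such a pair in three successive stages: $r^2 = 0$ for every $r \in \mathfrak{R}$; then $\mathfrak{R}^2 = 0$; then $\dim_\Q \mathfrak{R} \leq 1$.

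Stage one. Suppose some $r \in \mathfrak{R}$ has $r^2 \neq 0$, and let $k \geq 3$ be minimal with $r^k = 0$. By Artin's theorem the subalgebra $\Q\langle r\rangle$ is associative and commutative, so $r$ and $r^{k-1}$ commute; they are $\Q$-linearly independent because $r^{k-1} = \lambda r$ forces $r^n = \lambda^{n-1} r$ for all $n \geq 1$, whence $\lambda = 0$ by nilpotency and $r^{k-1}=0$, contradicting the choice of $k$. Since $r\cdot r^{k-1} = r^k = 0$ and $(r^{k-1})^2 = 0$, a binomial computation inside $\Q\langle r\rangle$ gives
\[
(1+r)^a(1+r^{k-1})^b = 1 + ar + \binom{a}{2}r^2 + \cdots + \Bigl(\binom{a}{k-1}+b\Bigr)r^{k-1},
\]
which equals $1$ only if $a=b=0$, producing the forbidden $\Z^2$. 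Stage two. Once $r^2=0$ for all $r \in \mathfrak{R}$, polarization in characteristic zero yields $uv + vu = 0$ for all $u,v \in \mathfrak{R}$. If $w := uv \neq 0$ for some $u, v \in \Gamma \cap \mathfrak{R}$, the alternative identities $[u,u,v]=0$ and $[u,v,u]=0$ give $uw = u(uv) = u^2 v = 0$ and $wu = (uv)u = u(vu) = -u^2 v = 0$; of course $w^2 = 0$ since $w \in \mathfrak{R}$. A hypothetical relation $w = \lambda u$, multiplied on the left by $v$ and combined with $v(uv) = (vu)v = -u v^2 = 0$ (using $[v,u,v]=0$ and $[u,v,v]=0$), forces $\lambda w = 0$ and hence $\lambda = 0$, so $u$ and $w$ are $\Q$-linearly independent. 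The construction applied to $(u,w)$ yields another $\Z^2$ and hence $\mathfrak{R}^2 = 0$.

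Stage three. With $\mathfrak{R}^2 = 0$ every pair of elements of $\mathfrak{R}$ commutes and annihilates, so any two $\Q$-linearly independent elements of $\Gamma \cap \mathfrak{R}$ immediately trigger the construction of a free abelian subgroup of rank two; this forces $\dim_\Q \mathfrak{R} \leq 1$, and taking $j_0$ to be a nonzero generator of $\mathfrak{R}$ (or $j_0=0$ if $\mathfrak{R}=0$) delivers the stated form. The step I expect to be most delicate is stage two: non-associativity blocks the usual associative manipulations, so verifying that $u$ and $w=uv$ commute, annihilate each other, and are $\Q$-linearly independent must be carried out through the linearized alternative identities together with Artin's theorem rather than by a direct calculation in an associative algebra.
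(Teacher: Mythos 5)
Your proposal is correct, and it takes a different route from the paper: the paper's entire proof is the observation that, by Artin's theorem, any two elements generate an associative subalgebra, followed by a citation of the associative-case results \cite[Lemma 3.2 and Corollary 3.3]{ijs}; you instead reconstruct the whole argument from scratch, which is what a self-contained treatment actually requires. Your three-stage reduction (every $r\in\mathfrak{R}$ satisfies $r^2=0$, then $\mathfrak{R}^2=0$, then $\dim_\Q\mathfrak{R}\le 1$) and the workhorse construction $\langle 1+u,1+v\rangle\cong\Z^2$ are sound, and the identities you invoke in stage two ($[u,u,v]=[u,v,u]=[u,v,v]=0$ and the polarization $uv=-vu$) are all valid in an alternative algebra. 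Two remarks. First, a small slip in stage one: from $r^{k-1}=\lambda r$ it does \emph{not} follow that $r^n=\lambda^{n-1}r$ for all $n\ge 1$ (take $n=2$ with $k>3$); the independence of $r$ and $r^{k-1}$ is better seen from $0=r^k=r\cdot r^{k-1}=\lambda r^2$, which forces $\lambda=0$ and hence $r^{k-1}=0$, contradicting minimality of $k$. Second, the delicacy you anticipate in stage two largely evaporates: since $w=uv$ lies in the subalgebra generated by $u$ and $v$, Artin's theorem lets you perform every computation there ($uw=u^2v=0$, $wu=-u^2v=0$, $vw=-uv^2=0$) as if in an associative algebra, which is precisely the reduction the paper exploits; your linearized-identity derivation is a valid, if more laborious, substitute.
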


\begin{proof} 
As $\mathfrak{R}$ is nilpotent, there exists a positive integer $n$ and the chain of ideals $\mathfrak{R} \supset \mathfrak{R}^2 \cdots \supset
  \mathfrak{R}^n=0$. Suppose, $n \geq 3$.
  Set $x,y \in \mathfrak{A}$, such that, $x \in \mathfrak{R}^{n-2} \setminus
 \mathfrak{R}^{n-1}$ and $y \in \mathfrak{R}^{n-1}$. For a $\Z$-order $\Gamma \subset \mathfrak{A}$, there exist
  $\alpha, \beta \in \Z$, such that, $\{\alpha x, \beta y \} \subset (\Gamma \cap \mathfrak{R})$. Clearly,
   $y^2=0=x^3=0$ and $\alpha x, \beta y$ are nilpotent elements. Hence $(1+\alpha x), (1+\beta y) \in \U(\Gamma)$. We claim that $\langle 1+\alpha x \rangle \cap \langle
     1+\beta y)\rangle =\{1\}$. In fact, since by Artin's Theorem, \cite{sch}, the subalgebra generated by $x,y$ is an associative algebra, for a positive integer $r$, we have that 
 $(1+\alpha x)^r=1+r \alpha x+\frac{r^2-r}{2}(\alpha x)^2$. For a negative exponent, write $(1+\alpha x)^{-r}=((1+\alpha x)^{-1})^r=(1-\alpha x +(\alpha x)^2)^r=1-r(\alpha x-(\alpha x)^2)+{ r \choose 2}(\alpha x)^2=1-r (\alpha x)+\frac{r^2+r}{2} (\alpha x)^2$. So, for an arbitrary $r \in \Z$, we can write $(1+\alpha x)^r=1+|r| \alpha x+\frac{r^2-|r|}{2}(\alpha x)^2$, where $|r|$ is the absolute value of the integer $r$.  Similarly, for $s \in \Z$ we have $(1+\beta y)^s=1+|s| \beta y$. So, assume that, for some $r,s \in \Z$, we have $(1+\alpha x)^r= (1+\beta y)^s$. Hence, $1+|r| \alpha x+\frac{r^2-|r|}{2}(\alpha x)^2=1+|s| \beta y$, and thus, if $r \neq 0$, we can write $x=\frac{1}{|r|\alpha}(1+|s| \beta y-\frac{r^2-|r|}{2}(\alpha x)^2) \in \mathfrak{R}^{n-1}$, a contradiction.
 As $xy=yx$, then we would have that $\langle 1+\alpha x, 1+\beta y\rangle \cong \Z^2$. 
 Since $\mathfrak{A}$ has the
       hyperbolic property, then we must have that $n \leq 2$. 
       
       As $\mathfrak{R} \neq 0$, then $n=2$. Now we prove that $dim(\mathfrak{R})_{\Q}=1$. Suppose $dim(\mathfrak{R})_{\Q}>1$, then there exist two linearly independent elements $x,y \in \mathfrak{R}$, and we compute $(1+\alpha x)^r=1+|r| \alpha x$ and $(1+\beta y)^s=1+|s| \beta y$. So, as before, we have that $\langle 1+\alpha x \rangle \cap \langle
     1+\beta y)\rangle =\{1\}$, otherwise we would get $x= \frac{|s| \beta}{|r| \alpha}y$. This can not happen, as 
       $\mathfrak{A}$ has the hyperbolic property. Thus, $dim(\mathfrak{R})_{\Q}=1$, and there exists $j_0$ as in the proposition.
\end{proof}

\begin{definition}[Totally Definite Octonion Algebra] \label{tdoa} Let $\K$ be a field.
An alternative division algebra $\mathfrak{A}$, with center $\K$, is called an {\it Octonion Algebra} if there exist $x,y,z \in \mathfrak{A}$, such that, the set $\mathcal{B}=\{1,x,y,xy\} \cup \{z,xz,yz,(xy)z\}$ is a $\K$-basis of
$\mathfrak{A}$, with $x^2=-\alpha$, $y^2=-\beta$,
$z^2=-\gamma$ and $\alpha, \beta,\gamma \in \K$. If $\K$ is totally real, and the elements $\alpha, \beta, \gamma$ are totally positive,  then we say that $\mathfrak{A}$ is a {\it totally definite
octonion algebra}. In this case we write $\mathfrak{A} =(\K, -\alpha,-\beta,-\gamma)$.
\end{definition}

The totally definite octonion algebra
$\mathfrak{A}:=(\K,-\alpha,-\beta,-\gamma)$ is non-split: since
$\alpha, \beta,\gamma$ are totally positive and $\K$ is totally real,
the equation $x^2+\alpha y^2+\beta z^2+\alpha\beta w^2+\gamma t^2=0$, $x,y,z \in \K$,
has only the trivial solution. Thus, by \cite[Theorem 3.4]{gjp}, $\mathfrak{A}$ is non-split.

Next we give a characterization of the totally definite octonion algebras which is a natural extension 
of \cite[Lemma $21.3$]{uni}. Recall that by a corolary in \cite[p. $152$]{shest}, since $\mathfrak{A}$ is an alternative algebra which is a division ring that is not associative, its center is a field and $\mathfrak{A}$ is a Cayley-Dickson algebra over its center.

Let $\Gamma_1, \Gamma_2$ be $\Z$-orders of  an alternativa algebra $\mathfrak{A}$ over its center $\K$ a number field. Some properties, for orders of associative algebras, still hold in the non-associative case. For example, $\Gamma_1 \cap \Gamma_2$ is an order of $\mathfrak{A}$. Consequently, if $\mathfrak{O}$ is a maximal order in $\mathfrak{A}$ and $\oo_\K$ the ring of integers of $\K$ which is a $\Z$-order of $\K$, then $(\mathfrak{O}\cap \K) \cap \oo_\K$ is a $\Z$-order of $\K$. It is well known, that $\oo_\K$ is the  unique maximal $\Z$-order of $\K$. Hence, $(\mathfrak{O}\cap \K) \cap \oo_\K = \oo_\K$, so $\oo_\K \subseteq (\mathfrak{O} \cap \K)$ and $(\mathfrak{O} \cap \K)=\oo_\K$.

\begin{lemma}
Let $\A$ be a finite dimensional algebra over $\K$, $B=\{b_1, \cdots, b_n\}$ a basis of $\A$ over $\K$ and $\oo_{\K}$ the ring of algebraic integers of $\K$. Set $O_{\A}=\oo_\K+\oo_\K b_1+ \cdots + \oo_\K b_n$. Then, every maximal $\Z$-order $\Lambda$ of $\A$ contains the order $O_{\A}$.  
\end{lemma}

\begin{proof}
Since $\Lambda$ is a $\Z$-order, it is finitely generated as $\Z$-module. Also $\Lambda$ contais a $\Q$-basis of $\A$. Clearly, $R=O_{\A}+\Lambda$ is finitely generated as $\Z$-module. Thus $R$ is a $\Z$-order of $\A$ which contains $\Lambda$. The maximality of $\Lambda$ yields that  $O_{\A} \subset \Lambda$.
\end{proof}

\begin{remark} 
Given any $\Z$-order $O$ in $\A$, it is contained in a maximal $\Z$-order $\Lambda$. As both $[\U(\Lambda):\U(O)]$ and $[\U(\Lambda):\U(O_\A)]$ are finite, it follows that any unit loop of the form $\U(O)$ is finite if and only if $\U(O_A)$ is finite.
\end{remark}

\begin{theorem}
\label{uniex}

Let  $\mathfrak{A}$ be a non commutative and nonassociative alternative division algebra, finite 
dimensional over its center $\K$. Suppose that $\K$ is a number field,  $\oo_{\K} $ its ring of
algebraic integers and  $\mathfrak{O}$  a maximal order in
$\mathfrak{A}$. Then, the index $|\U(\mathfrak{O}):\U(\oo_\K)|<\infty$ if and only if the algebra $\mathfrak{A}$ is a totally definite octonion algebra.
\end{theorem}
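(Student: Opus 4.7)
The plan is to prove the cyclic chain $(3) \Rightarrow (1) \Rightarrow (2) \Rightarrow (3)$, following the scheme of \cite[Lemma $21.3$]{uni} for totally definite quaternion algebras but replacing the four-dimensional norm form by the eight-dimensional octonion norm and the unit group by the Moufang unit loop.

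For $(3) \Rightarrow (1)$ I would use a geometry-of-numbers argument. When $\mathfrak{A} = \K(-\alpha,-\beta,-\gamma)$ is totally definite and $\sigma: \K \to \R$ is any archimedean embedding, the tensor product $\mathfrak{A} \otimes_{\K,\sigma} \R$ is an octonion $\R$-algebra whose norm form, in the coordinates of the canonical basis $\mathcal{B}$, becomes $x_1^2 + \sigma(\alpha) x_2^2 + \sigma(\beta) x_3^2 + \sigma(\alpha\beta) x_4^2 + \sigma(\gamma) x_5^2 + \sigma(\alpha\gamma) x_6^2 + \sigma(\beta\gamma) x_7^2 + \sigma(\alpha\beta\gamma) x_8^2$, which is positive definite because $\alpha,\beta,\gamma$ are totally positive. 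The maximal order $\mathfrak{O}$ embeds diagonally as a discrete full-rank lattice in $\prod_\sigma (\mathfrak{A} \otimes_{\K,\sigma} \R)$, while $L_1$ sits inside the product of the compact spheres $\{n_\sigma = 1\}$. A discrete subset of a compact set is finite.

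For $(1) \Rightarrow (2)$ I would exploit the fact that the octonion norm is a multiplicative quadratic form, so $n : \U(\mathfrak{O}) \to \U(\oo_\K)$ is a loop homomorphism, well defined because $n(u)n(u^{-1}) = 1$, with kernel $L_1$. The Cayley--Dickson involution fixes $\K$ pointwise, hence $n$ restricted to the central subloop $\U(\oo_\K)$ is the squaring map, whose image $\U(\oo_\K)^2$ has finite index in $\U(\oo_\K)$ by Dirichlet's unit theorem. Consequently $n(\U(\mathfrak{O}))$ has finite index in $\U(\oo_\K)$; combined with $|L_1|<\infty$ and a rank comparison in the finitely generated arithmetic Moufang loop $\U(\mathfrak{O})$, this yields $[\U(\mathfrak{O}) : \U(\oo_\K)] < \infty$.

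For $(2) \Rightarrow (3)$ I would argue the contrapositive. If $\mathfrak{A}$ is not a totally definite octonion algebra, then either $\K$ admits a complex embedding or some real embedding $\sigma$ splits $\mathfrak{A}$, so at some archimedean place $v$ the completion $\mathfrak{A}_v$ is the split (Zorn) octonion algebra and its norm-one loop is non-compact. A Borel-type argument along the lines of \cite{gjp} then produces units of $\mathfrak{O}$ that are unbounded at $v$, forcing the rank of $\U(\mathfrak{O})$ to strictly exceed that of $\U(\oo_\K)$ and contradicting $(2)$. The main obstacle lies precisely here: the rank comparison needed in $(1) \Rightarrow (2)$ and the unbounded-unit construction in $(2) \Rightarrow (3)$ require extending Dirichlet--Borel--Harish-Chandra-type statements to the Moufang unit loop of an octonion order, where one must substitute loop-theoretic analogues for results usually stated for arithmetic subgroups of associative division algebras.
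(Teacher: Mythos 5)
Your implications $(3)\Rightarrow(1)$ and $(1)\Rightarrow(2)$ are essentially sound: the first is the standard discreteness-plus-compactness argument (the paper simply cites \cite[Lemma $21.3$]{uni} here), and the second is in substance the paper's own argument, which considers the induced map $\varphi:\U(\mathfrak{O})/\U(\oo_\K)\rightarrow \U(\oo_\K)/(\U(\oo_\K))^2$ and observes that both its kernel (controlled by $L_1$) and its target (finite by Dirichlet) are finite --- note that this phrasing lets you avoid the ``rank comparison in the finitely generated arithmetic Moufang loop'' that you invoke, which already presupposes a finite-generation statement for $\U(\mathfrak{O})$ that is not available off the shelf.

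The genuine gap is in $(2)\Rightarrow(3)$, and you have identified it yourself: your contrapositive requires a Borel--Harish-Chandra-type construction of units that are unbounded at a split archimedean place, carried out inside the Moufang unit loop of an octonion order. No such loop-theoretic analogue is established, so as written this step does not close. The paper avoids the issue entirely by never doing arithmetic in the octonion loop: since alternative algebras are diassociative (Artin's theorem), one passes to a maximal \emph{associative} division subalgebra $D\subset\mathfrak{A}$; the order $\mathfrak{O}\cap D$ has unit group of finite index over $\U(\oo_\K)$ by hypothesis $(2)$, so the associative case \cite[Lemma $21.3$]{uni} applies verbatim and gives that $D$ is a totally definite quaternion algebra and $\K$ is totally real. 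One then checks that for any $x_0\in\mathfrak{A}\setminus D_0$ the element $x_0^2$ lies in $\K$ and is totally negative, so that $\mathfrak{A}=(D_0,x_0^2)$ is the Cayley--Dickson double of a totally definite quaternion algebra by a totally positive parameter, i.e.\ a totally definite octonion algebra. If you want to salvage your approach, you should replace the appeal to arithmetic-group theory at a split place by this reduction to associative subalgebras; otherwise you must first prove the loop-theoretic unbounded-unit statement, which is a substantial missing ingredient rather than a routine adaptation.
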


\begin{proof} 
Assume that $|\U(\mathfrak{O}):\U(\oo_\K)|<\infty$. As we observed above, $\mathfrak{A}$ is a Cayley-Dickson algebra $(\K,-\alpha,-\beta,-\gamma)$, where $\alpha, \beta, \gamma \in \K$. Thus, $\mathfrak{A}$ is a composition algebra $(\A, -\gamma)$, where $\A$ is a quaternion algebra $H(\K)=(\frac{-\alpha, -\beta}{\K})$, with $\alpha, \beta \in \K$. Clearly, we can suppose $\alpha, \beta \in \mathfrak{O} \cap \K$, so that, $x^2=-\alpha$, $y^2=-\beta$ and $O_\A= \oo_\K+\oo_\K x+\oo_\K y+\oo_\K xy$ is a $\Z$-order of $\A$. By the previous lemma, $O_\A \subset \mathfrak{O}$, so $\oo_\K \subset O_\A \subset \mathfrak{O}$. Since $[\U(\mathfrak{O}):\U(\oo_\K)]$ is finite, then $[\U(O_\A):\U(\oo_\K)]$ is finite. By \cite[Lemma 21.3]{uni}, $\A$ is a totally definite quaternion algebra, $\K$ is totally real and $\alpha, \beta$ are totally positive. Similarly, we can interchange $\gamma$ with $\alpha$ or $\beta$ and conclude also that $\gamma$ is totally positive. Thus $\mathfrak{A}$ is a totally definite octonion algebra.

To prove the converse, we follow the arguments of \cite[Lemma $21.3$]{uni}. Let $B=\{1,x,y,z,xy,xz,$ $yz,(xy)z\}$ be a basis of $\mathfrak{A}$ over a totally real field $\K$, with $x^2=-\alpha$, $y^2=-\beta$, $z^2=-\gamma$ and $\alpha, \beta,\gamma \in \K$. For a set $X \subset \mathfrak{A}$, we shall denote by $L_1(X)$ the set of units of $X$ with  norm equal to $1$. 

Without loss of generality, we may take $\alpha, \beta, \gamma \in \oo_\K$ and define the  $\Z$-order $O=\oplus_{b \in B} \oo_{\K} b$. By the lemma above, $O \subset \mathfrak{O}$. By \cite[Lemma $VIII.2.6$]{gjp}, the index $[\U(\mathfrak{O}):\U(O)]$ is finite.

We claim that $L_1(O)$ is finite. To see this, notice that if $u \in L_1(O)$, then $u= \sum_{b \in B}c_i b$, with $c_i \in \oo_\K$, and $n(u)= c_1^2+\alpha c_2^2+\beta c_3^2+\gamma c_4^2 \alpha \beta+c_5^2 \alpha \gamma+c_6^2 \beta \gamma +c_7^2 \alpha \beta \gamma=1$. Since $\alpha, \beta, \gamma$ are totally positive elements, it follows that for all embeddings $\sigma$ of $\K$, the absolute values $|c^{\sigma}_i|$ are bounded for all $c_i$, $1\leq i \leq 7$. Thus $L_1(O)$ is finite.

 Since the norm $n(a)=a \overline{a}\in \K$ for all $a \in \mathfrak{A}$, consider the map $\varphi:\U(\mathfrak{O})/\U(\oo_\K)
\longrightarrow \U(\oo_\K)/(\U(\oo_\K))^2$, $\varphi(x \U(\oo_\K))=n(x) (\U(\oo_\K))^2$, which is easily seen to be well defined. If $x\U(\oo_\K) \in ker(\varphi)$,
then $\varphi(x \U(\oo_\K))=n(x) (\U(\oo_\K))^2 \in (\U(\oo_\K))^2$ and
$ n(x)=\lambda^2 \in (\U(\oo_\K))^2 \cap \K$. Define $z=\lambda^{-1}x \in \U(\mathfrak{O})$. Clearly $n(z)=1$  thus $\lambda^{-1}x \in L_1(\mathfrak{O})$
and the kernel of $\varphi$ is
$L_1(\mathfrak{O})\U(\oo_\K)/\U(\oo_\K)$. So $\frac{\U(\mathfrak{O})}{L_1(\mathfrak{O}) \U(\oo_\K)} \cong \frac{\U(\oo_\K)}{(\U(\oo_\K))^2}$ which is finite. As $L_1(\mathfrak{O})$ is finite, we get that $|\U(\mathfrak{O})/\U(\oo_\K)|$ is also finite, as desired.
\end{proof}

\begin{corollary}\label{cor} Let $\mathfrak{A}=\K(-\alpha,-\beta,-\gamma)$ be a totally definite octonion
algebra over a number field $\K$, and let $\mathfrak{O} \subset
\mathfrak{A}$ be a maximal $\Z$-order of $\mathfrak{A}$.  The
unit loop $\U(\mathfrak{O})$ has the hyperbolic property if and only if $\K$ is either $\Q$ or 
$\Q(\sqrt{-d})$, where $d<1$ is a square free integer.
\end{corollary}

\begin{proof} 
 Suppose $\U(\mathfrak{O})$ has the hyperbolic property. Let $\oo_{\K} $ be the ring of algebraic integers of $\K$. Then $\U(\oo_{\K}) \subset \U(\mathfrak{O})$. If $\U(\mathfrak{O})$ is finite, then $\U(\oo_{\K}) $ is a
finite group.  Since $\K$ is totally real, by Dirichlet's Unit
Theorem, $\K=\Q$. Suppose $\U(\mathfrak{O})$  is infinite.  By Theorem \ref{uniex} , $|\U(\mathfrak{O}):\U(\oo_{\K})|$
is finite, therefore $\U(\oo_{\K})$ is infinite. As $\U(\mathfrak{O}) \supset \U(\oo_{\K})$ has the hyperbolic property, and $\U(\oo_{\K})$ is infinite abelian, it has free rank equal to $1$. Thus, by Dirichlet's Unit
Theorem, $\K=\Q(\sqrt{-d})$, where the integer $d<1$.

Conversely, if $\K=\Q$, then $\U(\oo_\K)$ is finite and, by the previous theorem, $\U(\mathfrak{O})$ is finite and has the hyperbolic property. If
$\K=\Q(\sqrt{-d}), d<1$, then, by Dirichlet Unit Theorem, $\U(\oo_\K)
\cong \Z$, and the last theorem shows  that 
$\Z^2$ cannot be contained in $\U(\mathfrak{O})$. Hence $\U(\mathfrak{O})$ has the hyperbolic property.
\end{proof}


\section{A Structure Theorem}

In this section we prove a structure theorem for finite dimensional alternative $\Q$-algebras with the hyperbolic property. 

For a commutative associative unitary ring $R$, we shall denote by  $\mathfrak{Z}(R)$  the Zorn vector matrix algebra  over $R$, defined as in \cite[p. 21]{gjp}

\begin{lemma}\label{pzorn}
The Zorn vector matrix algebra over $\Q$, $\mathfrak{Z} (\Q)$, does not have the hyperbolic property.
\end{lemma}

\begin{proof} The Zorn vector matrix algebra $\Lambda= \mathfrak{Z} (\Z)$ is a $\Z$-order of $\mathfrak{Z}
(\Q)$, setting $e_{1}=(1,0,0)$,
$e_{2}=(0,1,0)$ and $(0)=(0,0,0)$, then the elements
$\theta_{1}=\left ( \begin{array}{ll}
      0&e_{1} \\ (0)&0
            \end{array} \right )$ and $\theta_{2}=\left ( \begin{array}{ll}
      0&e_{2} \\ (0)&0
            \end{array} \right )$
are communting nilpotent elements of index $2$ which are $\Z$-independent. Hence, $\langle 1+\theta_1,1+\theta_2 \rangle \cong \Z \times \Z$.
\end{proof}

\begin{theorem}
Let $\mathfrak{A}$ be a finite dimensional, semisimple, alternative $\Q$-algebra. Write $$\mathfrak{A} = \oplus \A_i,$$ a finite direct sum of simple alternative algebras and, for each index $i$, let $\Gamma_i$ be a $\Z$-order of $\A_i$. Then, the following holds.
\begin{enumerate}
\item If $\mathfrak{A}$ contains no non-zero nilpotent elements, then $\mathfrak{A}$ has the hyperbolic property if and only if each simple component $\A_i$ is a division ring and, at most, for one index $i_0$, the loop $\U(\Gamma_{i_{0}})$ is infinite and with the hyperbolic property. The other simple components are isomorphic to either:
\begin{description}
\item [($i$)] The rational field $\Q$ or a quadratic imaginary extension of $\Q$.
\item [($ii$)] A totally definite quaternion algebra over $\Q$.
\item [($iii$)] A totally definite octonion algebra over $\Q$.
\end{description}
\item If $\mathfrak{A}$ contains nilpotent elements, then precisely one simple component is isomorphic to $M_2(\Q)$ and all the others are division algebras, with finite loop of units, as in $(i), (ii), (iii)$ above.  
\end{enumerate}
\end{theorem}
\begin{proof}
$(1)$ Assume that $\mathfrak{A}$ has the hyperbolic property. As both matrix algebras and Zorn's vector matrix algebras contain nilpotents elements, it follows that $\mathfrak{A}$ is a direct sum of division rings. Clearly, at most one of these summands can contain an order $\Gamma_{i_{0}}$ with $\U(\Gamma_{i_{0}})$ infinite and with the hyperbolic property, as otherwise $\mathfrak{A}$ would not have this property.

The fact that the simple components are as stated follows from \cite[Lemmas $21.2$ and $21.3$]{uni}, in the associative case, and from Theorem \ref{uniex}, otherwise. 

The converse is trivial.

$(2)$ Assume that $\mathfrak{A}$ has the hyperbolic property. Because of Lemma \ref{pzorn}, we know that the non associative components must be division algebras.

Since the sum of all non associative components also has the hyperbolic property, it follows from \cite
[Theorem $3.1$]{ijs} that it must be as stated and the result follows. 

Once again, the converse is trivial. 

Notice that in this situation, the component with infinite loop of units is the one isomorphic to $M_2(\Q)$, as $\U(GL_2(\Z))$ is hyperbolic. 
\end{proof}

\begin{theorem}\label{thrma}
Let $\mathfrak{A}$ be a finite dimensional alternative $\Q$-algebra with non trivial radical $J=J(\mathfrak{A})$. Write $$\mathfrak{A} =  \A \oplus J,$$ a direct sum of $\Q$-vector spaces, where $\A=\oplus_{i \in I}\A_i$, is a finite direct sum of simple alternative algebras and, for each index $i$, let $\Gamma_i$ be a $\Z$-order of $\A_i$.  

The algebra $\mathfrak{A}$ has the hyperbolic property if and only if either:
\begin{enumerate}
\item $J$ is central, $1$-dimension over $\Q$, and the simple components of $\A$ are as described in $(i),(ii)$ and $(iii)$ of the previous theorem;
\item $J$ is non-central, $1$-dimension over $\Q$, and there exist indexes $i_1,i_2 \in I$, such that $\A_{i_1}\oplus\A_{i_1}\oplus J= T_2(\Q)$, the ring of $2\times 2$ upper triangular matrices over $\Q$, and the simple components $\A_i$, with $i \neq i_1,i_2$, are as described in $(i),(ii)$ and $(iii)$ of the previous theorem.
\end{enumerate}
Furthermore, in both cases, we have that $[J,\mathfrak{A},\mathfrak{A}]$=$[\mathfrak{A},J,\mathfrak{A}]$=$[\mathfrak{A},\mathfrak{A},J]=0$.
\end{theorem}

\begin{proof}Assume that $\mathfrak{A}$ has the hyperbolic property and $J$ is central. As shown in \ref{raddim}, $J$ is of dimension $1$ over $\Q$; so, there exists $j_0 \in J$ such that $J=\Q j_0$. Since $J = \langle 1+j_0 \rangle \cong \Z$ is central, it follows that for any $\Z$-order $\Gamma_i \subset \A_i$, the group $\U( \Gamma_i)$ must be finite and the result follows. Conversely, if $(1)$ holds, it follows trivially that $\mathfrak{A}$ has the hyperbolic property.

Assume that $\mathfrak{A}$ has the hyperbolic property and $J$ is non central. As above, $J$ is of the form $J=\Q j_0$, for some $j_0 \in J$. Proposition \ref{raddim} shows that $j_0^2=0$. For an idempotent $e \in \mathfrak{A}$, we have that $e j_0 \in J$, so $e j_0=\lambda j_0$, $\lambda \in \Q$. Since $e^2=e$ and $\mathfrak{A}$ is diassociative, we have $e(e j_0)=e j_0=\lambda j_0$ and also $e^2=(\lambda j_0)=\lambda e j_0=\lambda^2 j_0$. Thus $\lambda^2=\lambda$, so $\lambda=0$ or $1$.

Write $1=\sum_{i \in I}e_i$, with $e_i \in \A_i$. For each index $i \in I$, write $e_i j_0=\lambda_i j_0$. Then $j_0=(\sum_{i \in I}e_i)j_0=(\sum_{i \in I}\lambda_i)j_0$, so $(\sum_{i \in I}\lambda_i)=1$. Thus, there exists a unique index $i_1 \in I$ such that $\lambda_{i_1}=1$. Consequentely, $e_{i_1}j_0=j_0$, and $e_i j_0=0$ if $i \neq i_1$.
In a similar way, multiplying on the right, we see that there exists a unique index $i_2 \in I$ such that $j_0 e_{i_2}=j_0$ and $j_0 e_i=0$ if $i \neq i_2$.

Notice that $i_1=i_2$ would imply $j_0$ central and thus, also $J$ central. So we may assume that $i_1 \neq i_2$.

Set $M_{i_1}=\{m \in \A_{i_1}\ mj_0=0\}$, the left annihilator of $j_0$ in $\A_{i_1}$. We wish to prove that $M_{i_1}$ is an ideal, even if $\A_{i_1}$ is non associative.
Choose $a \in \A_{i_1}$ and $m \in M_{i_1}$. We shall show that $am \in M_{i_1}$. For  an element $a \in \A_{i_1}$ we shall denote by $\lambda_a$, the elements in $\Q$ such that $a j_0=\lambda_a j_0$.

Recall, see \cite[Proposition $I.1.11$]{gjp}, that for any $x,y,z$ in $\mathfrak{A}$, we have that $$(xy)[x,y,z]=y(x[x,y,z]) \ \ (1).$$ Now, as $[a,m,j_0]=(am)j_0-a(mj_0)=(am) j_0=\lambda_{am} j_0$, in the formula above, we have $(am) \lambda_{am} j_0=m(a \lambda_{am} j_0)$. If $\lambda_{am}=0$, clearly $(am) j_0=0$. If we have $\lambda_{am} \neq 0$, we can cancel to get $(am) j_0=m(a j_0)=m(\lambda_ a j_0)=\lambda_a (m j_0)=0$, as desired.

Now, we claim that also $ma \in M_{i_1}$. Notice that $[m,a,j_0]=(ma)j_0-m(a j_0)=(ma) j_0- \lambda_a m j_0=(ma)j_0$.
Using again formula $(1)$, with $x=m, \ y=a$ and $z=j_0$, we get $(ma)((ma) j_0)=a(m((ma)j_0))=\lambda_{ma} a(m j_0)=0$; $(ma)((ma)j_0))=(\lambda_{ma})^2 j_0$, so $\lambda_{ma}=0$, proving that $(ma)j_0=0$. Hence $ma \in M_{i_1}$ and $M_{i_1}$ is an ideal.

Assume now that $M_{i_1} \neq 0$. As $\A_{i_1}$ is simple, it follows that $M_{i_1}=\A_{i_1}$. For any $a \in \A_{i_1}$, we have $a j_0= \lambda_a j_0$ and it follows that $0=a j_0=\lambda_a j_0$, implying that $a=0$, and thus $\A_{i_1}=0$, a contradiction. Thus we have shown that $M_{i_1}=0$. Since $a-\lambda_a e_{i_1} \in M_{i_1}=(0)$, all $a \in \A$ are of the form $a=\lambda_a e_{i_1}$. Hence $\A = \Q e_{i_1}$.

In a similar way, it can be shown that $\A_{i_2} = \Q e_{i_2}$.

By \cite[Theorem $3.6$, item $(iv))$]{ijs} we have that $\A_1\oplus \A_N \oplus J \cong  T_2(\Q)$. The fact that the other simple components are as stated follows again from Lemma \ref{sojj} and Lemma \ref{pzorn}.

Notice that since $J^2=(0)$, the arguments above also show that $M_L$, the left annihilator of $J$ in  $\mathfrak{A}$ is $M_L=\oplus_{i \neq i_1}\A_1 \oplus J$. 

Finally, choose $x,y \in \A$ and $j=\lambda j_0 \in J$. Then $[x,y,j_0]=(\lambda_{xy}-\lambda_x \lambda_y)j_0$.
Write $x=x_L \oplus x_1$ and $y=y_L \oplus y_1$, with $x_L,y_L \in M_L$ and $x_1,y_1 \in \A_{i_1}$. Then $(xy)j_0=(x_Ly_L+ x_1y_L+x_L y_1+ x_1y_1)j_0=(x_1y_1)j_0$, showing that $\lambda_{xy}=\lambda_x \lambda_y$, and thus $[x,y,j]=0$, for all $x,y \in \mathfrak{A}$.

Similarly, it follows that $[J,\mathfrak{A},\mathfrak{A}]=[\mathfrak{A},J,\mathfrak{A}]=0$, and the proof is complete.

\end{proof}


\section{Alternative loop algebras}

Recall that   a loop
 $L$ is an $RA$-loop if its loop algebra $R L$ over some commutative, associative
and unitary ring $R$ of characteristic not equal to $2$ is
alternative, but not associative (see \cite{gjp}).
 
In this section we classify the $RA$-loops $L$, such that the loop of units  $ \U(\Z L)$ has the hyperbolic property. 

$RA$-loops are  Moufang Loops. The following duplication process, due to Chein, \cite{chein},  results in Moufang loops. In particular, all $RA$-loops are obtained in this way. Let $G$ be a nonabelian group, $g_0 \in \z(G)$ be a central element,
$\star:G \rightarrow G$  an involution such that $g_0^\star=g_0$
and $gg^\star \in \z(G)$, for all $g \in G$, and $u$ be an
indeterminate. The set $L=G \dot{\cup} G u=M(G,\star,g_0)$, with
the operations
$$\begin{array}{l}
\textrm{1. } (g)(hu)=(hg)u;\\
\textrm{2. } (gu)h=(gh^{\star})u;\\
\textrm{3. }(gu)(hu)=g_{0}h^{\star}g,
\end{array}$$
is a Moufang Loop (see \cite{gjp}).

A Hamiltonian loop is a
non-associative loop $L$ whose subloops are all normal. A theorem of
Norton gives a complete characterization of these loops ( \cite[Theorem $II.8$]{gjp}). 

\begin{proposition}\cite[Theorem $VIII.3.2$]{gjp} Let $L$ be a torsion group or a torsion $RA$ loop. Then $\U(\Z L)=\pm L$ if and only if $L$ is an abelian group of exponent $1,2,3,4$ or $6$, or a Hamiltonian $2$-group, or a Hamiltonian Moufang $2$-loop.

\end{proposition}

\begin{lemma} \cite[ Theorem $2.3$]{gp} \label{pgh}
Let $G$ be a Hamiltonian $2$-group and $L=M(G, *,g_{0})$. Then $L$ is an
$RA$-loop which is a Hamiltonian $2$-loop and $\U_1(\Z
L)=L$. 
\end{lemma}

\begin{lemma} \label{sojj}
Let $L$ be a finite $RA$-loop. If the algebra $\Q L$ has nonzero
nilpotent elements, then $\Q L$ has a simple component image which
is isomorphic to Zorn's matrix algebra over $\Q$.
\end{lemma}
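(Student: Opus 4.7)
The plan is to combine a Maschke-type semisimplicity result for $\Q L$ with the Bruck--Kleinfeld classification of simple alternative algebras and with the known structure of the Wedderburn components of the rational loop algebra of a finite $RA$-loop.

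First, since $L$ is a finite $RA$-loop, $\Q L$ is a semisimple alternative $\Q$-algebra (the alternative analogue of Maschke's theorem, see \cite{gjp}), so we may write
$$\Q L \cong A_1 \oplus \cdots \oplus A_n,$$
where each $A_i$ is a simple alternative $\Q$-algebra. By the Bruck--Kleinfeld theorem, each $A_i$ is either associative---in which case $A_i \cong M_{r_i}(D_i)$ for some division $\Q$-algebra $D_i$---or a (possibly split) Cayley--Dickson algebra over its center.

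Next, using the realization $L = M(G,*,g_0)$ of a finite $RA$-loop and the description of the primitive central idempotents of $\Q L$ available in \cite{gjp}, I would verify two structural facts about the components $A_i$: every associative simple component of $\Q L$ is in fact a division algebra (so no such component contains nilpotents), and every non-associative simple component of $\Q L$ is a Cayley--Dickson algebra whose center is $\Q$. This is the most delicate point and the main obstacle of the proof; it rests on a careful analysis of the involution $*$ together with the central element $g_0$ and their action on the characters of $L$.

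Granted these facts, the conclusion is immediate. Any nonzero nilpotent element of $\Q L$ projects nontrivially onto some summand $A_{i_0}$, which must itself contain a nonzero nilpotent. Division algebras admit no nilpotents, so $A_{i_0}$ must be a non-associative Cayley--Dickson algebra over $\Q$ that is not a division algebra; but the unique such algebra is the split one, namely the Zorn vector matrix algebra $\mathfrak{Z}(\Q)$. Therefore $A_{i_0} \cong \mathfrak{Z}(\Q)$ is the required simple epimorphic image of $\Q L$.
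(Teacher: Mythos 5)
Your skeleton---semisimplicity of $\Q L$, Bruck--Kleinfeld, then a case analysis on the simple components---is the right frame, and it is essentially the frame behind the paper's proof, which consists of nothing more than a citation of \cite[Corollaries $VI.4.3$ and $VI.4.8$]{gjp}. The problem is that the entire content of the lemma lives in the two ``structural facts'' you announce and then explicitly decline to establish (``this is the most delicate point and the main obstacle of the proof''). Deferring exactly the delicate point is a genuine gap: what you have written reduces the lemma to an unproved assertion about the Wedderburn components of $\Q L$, which is precisely what the cited corollaries of \cite{gjp} supply (the decomposition $\Q L\cong \Q(L/L')\oplus(\text{a sum of Cayley--Dickson algebras})$ together with the criterion for the presence of nilpotent elements).

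Worse, the second of your two facts is false as stated: the non-associative simple components of $\Q L$ need not have center $\Q$. Take $L=M_{16}(Q_8)\times C_3$, a finite $RA$-loop. Since $\Q M_{16}(Q_8)\cong 8\Q\oplus(\Q,-1,-1,-1)$ and $\Q C_3\cong\Q\oplus\Q(\sqrt{-3})$, the algebra $\Q L\cong \Q C_3\otimes_{\Q}\Q M_{16}(Q_8)$ has $(\Q(\sqrt{-3}),-1,-1,-1)$ as a simple component; its center is $\Q(\sqrt{-3})\neq\Q$, and it is split, because $1+\zeta_3+\zeta_3^2=0$ with $\zeta_3$ a square in $\Q(\sqrt{-3})$ makes the norm form (a sum of squares) isotropic. (Compare Corollary \ref{lmkl} of this very paper, where split Cayley--Dickson algebras over quadratic centers occur.) Consequently your closing step---``a Cayley--Dickson component containing nilpotents is a non-division Cayley--Dickson algebra over $\Q$, hence the split one $\mathfrak{Z}(\Q)$''---does not go through: such a component may be $\mathfrak{Z}(F)$ for a proper extension $F$ of $\Q$, which is not isomorphic to $\mathfrak{Z}(\Q)$. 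To obtain the statement as written you need the finer information in \cite[Corollary $VI.4.8$]{gjp} about which centers actually occur and when the component with center $\Q$ itself splits; your argument, even if the deferred verifications were carried out, would only yield that \emph{some} split Cayley--Dickson algebra over \emph{some} number field is a simple epimorphic image of $\Q L$.
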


\begin{proof} This is well known and follows from \cite[Corollary $VI.4.3$]{gjp} and 
\cite[Corollary $VI.4.8$]{gjp}.
\end{proof}
 
\begin{lemma}\label{tzlf} Let $L$ be a finite $RA$-loop. Then  $\U(\Z L)$ has the hyperbolic property
if and only if $\U_1(\Z L)$ is trivial, i.e., $\U_1(\Z L) = L$ .
\end{lemma}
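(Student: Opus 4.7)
The backward direction is immediate: if $\U(\Z L)=\pm L$, then $\U(\Z L)$ is finite and hence contains no free abelian subgroup of rank two, so it has the hyperbolic property.

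For the forward direction, suppose $\U(\Z L)$ has the hyperbolic property. The overall plan is to show first that $\U(\Z L)$ must be finite, and then to appeal to the characterization of finite $RA$-loops whose integral loop ring has only trivial units (Proposition \ref{pgh} together with the Chein--Goodaire classification in \cite{gjp}) in order to conclude $\U(\Z L)=\pm L$.

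I begin by ruling out non-zero nilpotent elements in $\Q L$. If $\Q L$ had such an element, then by Lemma \ref{sojj} it would admit Zorn's vector matrix algebra $\mathfrak{Z}(\Q)$ as a simple epimorphic image; a $\Z$-order of $\Z L$ would project onto a $\Z$-order of this Zorn component, and Proposition \ref{pzorn} would then supply a free abelian subgroup of rank two inside $\U(\Z L)$, contradicting hyperbolicity. Hence $\Q L$ is semi-simple without non-zero nilpotents, so item~$(1)$ of Theorem \ref{thrma} applies: $\Q L=\bigoplus_i\A_i$, where each $\A_i$ is of one of the four prescribed types, and at most one index $i_0$ yields an infinite $\U(\Gamma_{i_0})$.

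The main obstacle is to exclude this exceptional component $\A_{i_0}$. By Theorem \ref{thrma} and Lemma \ref{lemm}, such an $\A_{i_0}$ would have to be a totally definite quaternion or octonion algebra whose centre is $\Q(\sqrt{-d})$ with $-d>1$. To preclude this for a finite $RA$-loop, I would combine the description of the Wedderburn components of $\Q L$ from \cite[Chapter VI]{gjp} --- which controls the components of $\Q L$ through the decomposition of $\Q G$, where $L=M(G,\ast,g_0)$ --- with Theorem \ref{uniex}: the upshot is that every simple component of $\Q L$ has centre whose ring of integers has only finitely many units, so each $\U(\Gamma_i)$ is finite. Consequently $\U(\Z L)$ is commensurable with a finite product of finite groups, hence finite; combined with the inclusion $\pm L\subseteq\U(\Z L)$, the characterization of finite $RA$-loops with finite unit loop then forces $\U(\Z L)=\pm L$, completing the proof.
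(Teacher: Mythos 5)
Your backward direction and your elimination of nilpotent elements (via Lemma \ref{sojj} and Proposition \ref{pzorn}) are fine, and up to that point you run parallel to the paper. The genuine gap is in the step you flag as ``the main obstacle'': Theorem \ref{thrma}(1) still permits exactly one component $\A_{i_0}$ with $\U(\Gamma_{i_0})$ infinite, and you dispose of it by asserting that ``every simple component of $\Q L$ has centre whose ring of integers has only finitely many units.'' That assertion is not a general fact about finite $RA$-loops --- it is precisely the content that has to be proved under the hyperbolicity hypothesis --- and you give no argument for it beyond pointing at Chapter VI of \cite{gjp} and Theorem \ref{uniex}. Nothing you write rules out, say, a totally definite quaternion or octonion component over a real quadratic centre, which is exactly the exceptional case that Lemma \ref{lemm} and Theorem \ref{thrma} allow. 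A secondary weakness: having (hypothetically) shown $\U(\Z L)$ finite, you invoke Proposition \ref{pgh} to get $\U(\Z L)=\pm L$, but that proposition runs in the opposite direction (from a Hamiltonian $2$-group $G$ to trivial units); the passage from ``finite unit loop'' to ``trivial units'' needs the Higman-type classification, which you only gesture at.

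The paper takes a different and more economical route that sidesteps your obstacle entirely: it writes $L=M(G,\ast,g_0)$ and observes that hyperbolicity of $\U(\Z L)$ forces $\Z^2\not\hookrightarrow\U(\Z G)$, so the classification of such finite groups in \cite[Theorem 3.2]{jpp} applies. The constraint $G'\cong C_2$ from \cite[Theorem 3.1]{bj} eliminates $S_3$ and $C_3\rtimes C_4$; the groups $D_4$ and $C_4\rtimes C_4$ are eliminated because $\Q G$ would then contain nilpotents, whence $\Q L$ would contain a Zorn component (Lemma \ref{sojj}) and fail hyperbolicity (Proposition \ref{pzorn}). Only Hamiltonian $2$-groups survive, and Proposition \ref{pgh} then gives $\U(\Z L)=\pm L$ directly. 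If you want to salvage your Wedderburn-component approach, you must actually compute the centres of the simple components of $\Q L$ for an $RA$-loop from \cite[Chapter VI]{gjp} and show none of them can be the real quadratic field that Lemma \ref{lemm} would require for an infinite $\U(\Gamma_{i_0})$; as written, that step is missing.
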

\begin{proof} As we saw before, there exists a non-abelian finite
group $G$ such that $L=M(G,*,g_0)=G \displaystyle \dot{\cup}Gu$. Since $\U(\Z L)$ has the hyperbolic property, we have that $\Z^{2} {\not \hookrightarrow} \U(\Z G)$. As $G$ is finite,   \cite[Theorem $3.2$]{jpp} implies that 
 $G \in \{S_{3},D_{4},C_{3} \rtimes C_{4},C_{4} \rtimes C_{4} \} $ $\cup
\{M:M \textrm{ is a Hamiltonian $2$-group}\}.$  By  \cite[Theorem $3.1$]{bj}, $G^{'} \cong C_{2}$ and hence  $G \notin \{S_{3}, C_{3}\rtimes C_{4}\}$. We also have that $G \notin \{D_{4},C_{4} \rtimes
C_{4}\}$, since if this were the case then the algebra $\Q G$ would contain nilpotent
elements and thus, by Lemma \ref{sojj}, $\Q L$ would contain 
a copy of Zorn's matrix algebra, a contradiction.

Finally, if $G$ is a Hamiltonian $2$-group, then, by Proposition \ref{pgh},  $\U_1(\Z L)$ is trivial.
\end{proof}

 We can now characterize $RA$-loops whose integral loop ring has the hyperbolic property.

\begin{theorem}\label{kpm} Let $L$ be an $RA$-loop. The loop  $\U(\Z L)$ has the hyperbolic property if
 and only if the following conditions hold.
 \begin{enumerate} 
 \item The torsion subloop $T(L)$ is a Hamiltonian $2$-loop or an Abelian group of exponent dividing $4$ or $6$ or a Hamiltonian $2$-group.
 \item All subloops of $T(L)$ are normal in $L$.
 \item The Hirsh lenght of the center of $L$, $h(\z(L)) \leq 1$.  
\end{enumerate} 
 \end{theorem}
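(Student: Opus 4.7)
My approach is to prove each direction by reducing to the finite case handled in Lemma \ref{tzlf} and combining this with a Hirsch-length analysis of the centre of $L$.

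For the forward direction, assume $\U(\Z L)$ has the hyperbolic property. Condition $(3)$ comes first: the centre $\z(L)$ is a finitely generated abelian group (since $L$ is finitely generated), and its torsion-free quotient embeds into $\U(\Z L)$; if $h(\z(L))\geq 2$, this would give $\Z^{2}\hookrightarrow \U(\Z L)$, a contradiction. For condition $(1)$, every finitely generated subloop $H\leq T(L)$ is a finite $RA$-loop (or a finite group), and $\U(\Z H)\hookrightarrow \U(\Z L)$ inherits the hyperbolic property. Applying Lemma \ref{tzlf} when $H$ is non-associative, and the group-ring classification of \cite{jpp} when $H$ is associative, forces $\U(\Z H)=\pm H$; via the description $H=M(G,\ast,g_{0})$ and Proposition \ref{pgh}, this restricts $H$ to be abelian of exponent dividing $4$ or $6$, a Hamiltonian $2$-group, or a Hamiltonian $2$-loop. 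Taking a direct limit over such $H$ yields the claimed form of $T(L)$. For condition $(2)$, if a subloop $K\leq T(L)$ were not normal in $L$, there would exist $x\in L$ whose inner mapping sends $K$ outside itself; combining the induced infinite-order element in $L/T(L)$ with a suitable non-central torsion unit supported on $K$ would produce a copy of $\Z^{2}$ inside $\U(\Z L)$.

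For the reverse direction, assume $(1)$--$(3)$. Under these hypotheses $L$ is either finite or a central extension of $T(L)$ by an infinite cyclic group. In the finite case, Lemma \ref{tzlf} together with Proposition \ref{pgh} gives $\U(\Z L)=\pm L$, hence finite and a fortiori hyperbolic. In the infinite case, pick a central $z\in\z(L)$ of infinite order; normality of all subloops of $T(L)$ in $L$ together with the structure of $T(L)$ in $(1)$ permits a direct computation showing that $\U(\Z L)$ is commensurable with $\pm L$, which is virtually infinite cyclic and therefore contains no $\Z^{2}$.

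\textbf{Main obstacle.} The delicate step is the converse in the infinite case, where one must rule out exotic units in $\Z L$ arising from the interaction between the non-associative torsion part and the torsion-free central direction. This is where the full force of the paper's machinery enters: Theorem \ref{thrma} governs the finite-dimensional simple images of $\Q L$, forcing each component to be a number field, a totally definite quaternion algebra, or a totally definite octonion algebra whose unit group is commensurable with $\U(\oo_{\K})$ for $\K=\Q$ or imaginary quadratic, and the radical if present to be central and one-dimensional; hyperbolicity of $\U(\Z L)$ is then inherited componentwise together with the central $\Z$-direction.
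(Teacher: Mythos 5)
Your overall strategy (reduce to the finite case, force trivial units, then use the Hirsch length of the centre) matches the paper's, but two of your key steps do not work as stated. First, in the forward direction your argument for condition $(2)$ hinges on ``combining the induced infinite-order element in $L/T(L)$ with a suitable non-central \emph{torsion} unit supported on $K$'' to produce $\Z^2$. A torsion unit can never supply the second $\Z$-factor. The object you actually need is the bicyclic-type unit: if $\langle t\rangle$ is not normal and $l\in L$ conjugates $t$ outside $\langle t\rangle$, then $\theta=(1-t)l\hat{t}$ is nilpotent and $u_{t,l}=1+\theta$ is a unipotent unit of \emph{infinite} order with $u_{t,l}^{n}\notin L$ for all $n\neq 0$; it commutes with a central element $z$ of infinite order, giving $\langle u_{t,l},z\rangle\cong\Z^2$. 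Moreover this argument presupposes that $\z(L)$ has an infinite-order element; when $L$ is finite there is none, and you must instead invoke Lemma \ref{tzlf} to conclude that $L$ is a Hamiltonian $2$-loop, in which every subloop is automatically normal. Your sketch omits this case split.

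Second, your converse in the infinite case defers the essential point (``a direct computation showing that $\U(\Z L)$ is commensurable with $\pm L$'') and then appeals to Theorem \ref{thrma}, which concerns \emph{finite-dimensional} algebras and does not apply to $\Q L$ when $L$ is infinite. The paper closes exactly this gap with the trivial-unit theorem \cite[Proposition $XII.1.3$]{gjp}: conditions $(1)$ and $(2)$ give $\U(\Z L)=L[\U(\Z(T(L)))]=L(\pm T(L))=\pm L$ on the nose, and then $[L:\z(L)]=8$ together with $h(\z(L))\leq 1$ makes $L$ virtually cyclic, hence free of $\Z^2$. Without citing that proposition (or proving an equivalent trivial-units statement), ruling out exotic units of $\Z L$ for infinite $L$ remains unproved, and that is precisely the hard content of the converse.
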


\begin{proof}
Assume that $\U(\Z L)$ has the hyperbolic property. As $\z (L)$ is abelian and has the hyperbolic property, it follows that $h(\z(L))\leq 1$.

If $h(\z(L))=0$, then $\z(L)$ is finite and, since, by \cite[Corollary $IV.2.3$]{gjp}, $[L:\z(L)]=8$ then also $L$ is finite. By Lemma \ref{tzlf}, $\U_1(\Z L)=L$ and thus $L$ and $T(L)$ are Hamiltonian, so $(1)$ and $(2)$ hold.

If $h(\z(L))=1$, assume, by the way of contradiction, that there exists $t \in T(L)$, such that $\langle t \rangle$ is not normal in $L$. Then, there exists $l \in L$, such that, $\theta=(1-t)l \hat{t} \neq 0$ where $\hat{t}=1+t+ \cdots + t^{o(t)-1}$. As  $\theta^2=0$, $u=1+\theta$ is a unit and $u^n=1+n \theta \notin L$ for all integers $n \neq 0$. Hence, $\theta$ is of infinite order and $\langle \theta \rangle \cap L=(1)$. So, as we are assuming that $h(\z(L))=1$, it follows that $\U(\Z L)$ contains a copy of $\Z^2$, a contradiction. Once again, we have that $(1)$ and $(2)$ hold.

Conversely, items $(1)$ and $(2)$ and  \cite[Proposition $XII.1.3$]{gjp} imply that  $\U_1(\Z L)=L[\U_1(\Z(T(L)))]=L( T(L))= L$.  Since $[L:\z (L)]=8$ and $\z (L)$ is hyperbolic, it follows that  $L$ has the hyperbolic property.

\end{proof}


\begin{thebibliography}{00}
 \bibitem{bj} de Barros, L. G. X., Juriaans, S. O. {\it Units in Alternative
Integral Loop Rings}, Result. Math., 31 (1997), 266-281.
\bibitem{mbah} Bridson, M. R., Haefliger,  A.  {\it Metric Spaces of
Non-Positive Curvature}, Springer, Berlin, 1999.
\bibitem{chein}Chein, Orin Moufang loops of small order. I. Trans. Amer. Math. Soc. $188 \ (1974),\ 31-51$.
\bibitem{gjp}Goodaire,  E. G.,  Jespers, E.,  Polcino Milies, C. {\it
Alternative Loop Rings}, Elsevier, Oxford, 1996.
\bibitem{gp}  Goodaire, E. G., Polcino Milies, F. C. {\it When is a unit loop
    $f$-unitary?}, Proccedings of the Edinburgh Mathematical Society (2005)
    48, 125-142.
\bibitem{grmv}Gromov, M.  {\it Hyperbolic Groups, in Essays in Group
Theory}, M. S. R. I. publ. 8, Springer, 1987, 75-263.
\bibitem{ijs}Iwaki, E., Juriaans, S. O., Souza Filho, A. C. 
{\it Hyperbolicity of Semigroup Algebras}, Journal of Algebra, $319(12)(2008),5000-50015$.
\bibitem{ijsf} E. Iwaki, E. Jespers, Juriaans, S. O., Souza Filho, A. C. {\it Hyperbolicity of Semigroup Algebras II}, Journal of Algebra and its Applications, v. 9, p. 871-876, 2010. 
\bibitem{jpp} Juriaans, S. O.,  Passi,  I. B. S., Prasad, D. {\it
Hyperbolic Unit Groups}, Proceedings of the American Mathematical
Society, vol 133(2), 2005, pages 415-423.
\bibitem{jps}Juriaans, S. O., Passi, I. B. S., Souza Filho,  A. C. {\it Hyperbolic $\Z$-Orders and Quaternion
Algebras}, Proc. Indian Acad. Sci. (Math. Sci.)
 Vol. 119, No. 1,  February 2009,  pp. $1-14$.
\bibitem{sch} Schafer, R. D.{\it An Introduction to Nonassociative
Algebras}, Academic Press Inc., USA, $1966$.
\bibitem{uni} Sehgal, S. K. {\it Units in Integral Group Rings}, Longman,
  Harlon, 1994.
\bibitem{shest}Zhevlakov, K. A.; Slin'ko, A. M.; Shestakov, I. P.; Shirshov, A. I. {\it Rings that are nearly associative.} Translated from the Russian by Harry F. Smith. Pure and Applied Mathematics, 104. Academic Press, Inc. [Harcourt Brace Jovanovich, Publishers], New York-London, 1982. xi+371 pp. ISBN: 0-12-779850-1, 17-02 
\bibitem{thss}Souza Filho, A.C.  {\it Sobre uma Classifica\c{c}\~ao dos An\'eis de Inteiros, dos Semigrupos Finitos
e dos RA-Loops
com a Propriedade Hiperb\'olica (On a Classification of the Integral Rings, Finite Semigroups and RA-Loops
with the Hyperbolic
Property)}, PhD. Thesis, IME-USP, S\~ao Paulo, 2006, 108 pages.
\end{thebibliography}

%
%
%
%

\end{document}